\newcommand{\vect}[1]{\mathbf{#1}}
\newtheorem{theorem}{Theorem}[section]
\newtheorem{definition}[theorem]{Definition}
\newtheorem{remark}[theorem]{Remark}
\newtheorem{corollary}[theorem]{Corollary}
\DeclareMathOperator\SO{SO}
\newcommand{\R}{{\mathbb R}}
\newcommand{\Sph}{{\mathbb S}}
\newcommand{\UH}{{\mathbb H}}
\begin{document}

\title{On the Regularization of the Kepler Problem}

\author
{Gert Heckman and Tim de Laat\\
Radboud University Nijmegen}

\date{Dedicated to the memory of Hans Duistermaat (1942-2010)}

\maketitle

\section{Introduction}

The Kepler problem is an ancient problem, solved for the first time by Newton 
and well studied over more than three centuries \cite{Newton}, \cite{Milnor}, 
\cite{Haandel-Heckman}, \cite{Guillemin-Sternberg}, \cite{Cordani}. 
Newton showed that the solutions of his equation of motion in Hamiltonian form
\[ \dot{\vect{q}}=\vect{p}\;,\;\dot{\vect{p}}=-\vect{q}/q^3 \] 
are planar ellipses traversed according to the area law. 
What is commonly called the Kepler problem is a study of the solutions of these equations, 
preferably from a geometric point of view and taking symmetry arguments into account.
It might come as a surprise that the past half century we have still witnessed new insights on the Kepler problem.

A key point in a modern treatment of the Kepler problem in $\R^n$ is the 
regularization of the collision orbits, which goes back to Moser \cite{Moser}.
The treatment of Moser relates the Kepler flow for a fixed negative energy level 
to the geodesic flow on the sphere $\Sph^n$. The method of Moser is geometrically
well motivated. The main calculation to be done is the determination of the explicit 
transformation formulas for stereographic projection on the level of cotangent bundles.

An alternative approach to the regularization, having the advantage of transforming canonically
the negative energy part of phase space $\R^{2n}$ for the Kepler Hamiltonian
into the punctured cotangent bundle of $\Sph^n$, was found by Ligon and Schaaf \cite{Ligon-Schaaf}.
Unfortunately the article by Ligon and Schaaf requires a good deal of computations.
Cushman and Duistermaat have given a simplified treatment of the Ligon-Schaaf regularization map, 
but their calculations are still more laborious than one would like \cite{Cushman-Duistermaat}.

The main novelty of this article is to show that the Ligon-Schaaf regularization map
is almost trivially understood as an adaptation of the Moser regularization map.
For this reason we recall in Section 2 the Moser regularization, essentially following the original paper by Moser.
The Ligon-Schaaf map is the natural adaptation of the Moser map intertwining the Kepler flow
on the negative energy part $P_-$ of the phase space $\R^{2n}$
and the (geodesic) Delaunay flow on the punctured cotangent bundle $T^{\times}$ of the sphere $\Sph^n$
in a canonical way.

The Kepler problem has hidden symmetry in the sense that the visible symmetry by the orthogonal group 
of size $n$ extends to size $(n+1)$. Of course this hidden symmetry becomes naturally visible on $\Sph^n$,
and all one has to do is to calculate the pull-back under the Ligon-Schaaf map of all angular momenta on $T^{\times}$.
So our discussion in Section 4 of this hidden symmetry is based on the (defined)
equivariance of the Ligon-Schaaf map for the full symmetry group. In this way one is naturally led to
the components of the Lenz vector (divided by the square root of $-2H$) for the extension of Lie algebras
from $\mathfrak{so}(n)$ to $\mathfrak{so}(n+1)$.

All arguments of this paper have natural adaptations from the negative energy
part $P_-$ of phase space to the positive energy part $P_+$ of phase space, in which case
$T^{\times}$ becomes the punctured cotangent bundle of hyperbolic space $\UH^n$.
Details are easily filled in and left to the reader.
At another occasion we hope to discuss the implications of the Ligon-Schaaf regularization 
in classical mechanics for the quantum mechanics of the hydrogen atom \cite{Pauli}, \cite{Fock}, \cite{Bargmann}.

\section{Regularization after Moser}

Let us describe the transformation formulas for stereographic projection.
Let $\vect{n}=(0,\cdots,0,1)$ be the north pole of the unit sphere $\Sph^n$ in $\R^{n+1}$,
and let $\vect{u}=(u_1,\cdots,u_{n+1})$ be another point of $\Sph^n$.
Let $\vect{v}=(v_1,\cdots,v_{n+1})$ be a covector on $\Sph^n$ at $\vect{u}$, so that
\[ \vect{u}\neq\vect{n}\;,\;\vect{u}\cdot\vect{u}=1 \;,\;\vect{u}\cdot\vect{v}=0 \]
are the constraints on $\vect{u},\vect{v}$ in $\R^{n+1}$.
The line through $\vect{n}$ and $\vect{u}$ intersects the hyperplane $\R^n$ orthogonal to $\vect{n}$ in the point $\vect{x}$,
and let $\vect{y}$ be a covector on $\R^n$ at $\vect{x}$. 
The map $\vect{x}\mapsto \vect{u}$ is called stereographic projection,
and $\vect{u}\mapsto\vect{x}$ inverse stereographic projection.

\begin{center}
\psset{unit=0.9mm}
\begin{pspicture}*(-50,-37)(50,37)

\pscircle(0,0){30}

\psline(-50,0)(50,0)
\psline(0,-40)(0,40)
\psline(0,30)(24,-18)

\psdot(0,0)
\psdot(0,30)
\psdot(15,0)
\psdot(24,-18)

\rput(-3,33){$\vect{n}$}
\rput(-3,-3){$\vect{o}$}
\rput(13,-3){$\vect{x}$}
\rput(20,-18){$\vect{u}$}

\rput(24,24){$\Sph^n$}
\rput(45,3){$\R^n$}

\end{pspicture}
\end{center}

\begin{theorem}\label{stereographic projection formulas}
The transformation formulas for the inverse stereographic projection $T^{\ast}\Sph^n \rightarrow T^{\ast}\R^n, (\vect{u},\vect{v})\mapsto(\vect{x},\vect{y})$ are given by
\begin{align*}
   x_k &= u_k/(1-u_{n+1}) \\
   y_k &= v_k(1-u_{n+1})+v_{n+1}u_k
\end{align*}
and for the stereographic projection 
$T^{\ast}\R^n \rightarrow T^{\ast}\Sph^n,(\vect{x},\vect{y})\mapsto(\vect{u},\vect{v})$ become
\begin{align*}
   u_k &= 2x_k/(x^2+1)                            & u_{n+1} &= (x^2-1)/(x^2+1) \\
   v_k &= (x^2+1)y_k/2-(\vect{x}\cdot\vect{y})x_k & v_{n+1} &= \vect{x}\cdot\vect{y}
\end{align*}
for $k=1,\cdots,n$. These transformations are canonical in the sense that the symplectic forms
$\sum_1^n dx_k \wedge dy_k$ and (the restriction of) $\sum_1^{n+1} du_k \wedge dv_k$ match.
\end{theorem}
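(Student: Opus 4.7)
The plan is to treat the map in three stages: first derive the base-point correspondence $\vect{x} \leftrightarrow \vect{u}$ from planar geometry, next obtain the covector formulas as the unique lift that preserves the Liouville 1-form, and finally deduce canonicity as an automatic consequence.

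For the base map, I would parameterize the line from $\vect{n}=(0,\dots,0,1)$ through $\vect{u}$ as $\vect{n}+t(\vect{u}-\vect{n})$; its last coordinate vanishes at $t=1/(1-u_{n+1})$, yielding $x_k = u_k/(1-u_{n+1})$. Inverting by substituting into $\vect{u}\cdot\vect{u}=1$ produces a quadratic in the scaling factor and gives $u_k = 2x_k/(x^2+1)$ and $u_{n+1}=(x^2-1)/(x^2+1)$.

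For the covector stage I would identify $T^{\ast}\Sph^n$ with the constrained subset $\{\vect{u}\cdot\vect{u}=1,\ \vect{u}\cdot\vect{v}=0\}$ of $T^{\ast}\R^{n+1}$, equipped with the Liouville 1-form $\vect{v}\cdot d\vect{u}$ restricted to this subset. Differentiating $u_k(\vect{x})$ and $u_{n+1}(\vect{x})$ gives
\[
du_k = \tfrac{2}{x^2+1}\,dx_k - \tfrac{4x_k}{(x^2+1)^2}\,\vect{x}\cdot d\vect{x}, \qquad du_{n+1} = \tfrac{4}{(x^2+1)^2}\,\vect{x}\cdot d\vect{x}.
\]
Plugging in the candidate $v_k = (x^2+1)y_k/2-(\vect{x}\cdot\vect{y})x_k$ and $v_{n+1}=\vect{x}\cdot\vect{y}$ and computing $\sum_{k=1}^{n+1} v_k\,du_k$ should collapse algebraically to $\sum_{k=1}^{n} y_k\,dx_k$. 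The constraint $\vect{u}\cdot\vect{v}=0$ must be verified separately; after substitution it reduces to a polynomial identity. For the opposite direction I would invert the linear system explicitly, using $1-u_{n+1}=2/(x^2+1)$ to simplify, and arrive at $y_k = v_k(1-u_{n+1})+v_{n+1}u_k$; mutual inverseness is then a direct check.

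Canonicity is then automatic: once the Liouville 1-forms match under the map, their exterior derivatives---which (up to sign) are the symplectic forms in question---also match, so no further computation at the level of 2-forms is required. The main obstacle is purely bookkeeping: handling the constraint $\vect{u}\cdot\vect{v}=0$ correctly so that the cotangent lift from $\R^n$ lands on the correct submanifold of $T^{\ast}\R^{n+1}$, and keeping the polynomial identities straight. There is no conceptual difficulty beyond the fact that $\vect{v}$ is over-parameterized by one coordinate, and the formula $v_{n+1}=\vect{x}\cdot\vect{y}$ is exactly what is dictated by the orthogonality constraint.
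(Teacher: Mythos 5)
Your proposal is correct and follows essentially the same route as the paper: derive the base correspondence from the line geometry, verify that the Liouville one-forms $\sum v_k\,du_k$ and $\sum y_k\,dx_k$ agree under the stated formulas, and obtain canonicity by taking exterior derivatives (the paper phrases this as the fact that the formulas are the cotangent lift of the base diffeomorphism). The only difference is cosmetic: the paper reduces the one-form check to $n=1$ by rotational symmetry, whereas you carry out the general-$n$ computation (plus the constraint check $\vect{u}\cdot\vect{v}=0$) directly, which indeed collapses as you predict.
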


\begin{proof}
Clearly the last coordinate of $\vect{x}=\lambda\vect{u}+(1-\lambda)\vect{n}$ vanishes, and so $\lambda(1-u_{n+1})=1$.
Conversely $\vect{u}=\mu\vect{x}+(1-\mu)\vect{n}$ has unit length, and so $\mu(x^2+1)=2$. 
Hence the transformation formulas between $\Sph^n$ and $\R^n$ are obvious. 

By rotational symmetry we may suppose that $n=1$. In that case
\[ x_1=u_1/(1-u_2)\;,\;y_1=v_1(1-u_2)+v_2u_1 \]
and the canonical one form on $T^{\ast}\R$ becomes
\begin{align*}
  y_1dx_1 &= (v_1(1-u_2)+v_2u_1)(du_1/(1-u_2)+u_1du_2/(1-u_2)^2) \\
          &= v_1du_1+v_1u_1du_2/(1-u_2)+v_2u_1du_1/(1-u_2)+v_2u_1^2du_2/(1-u_2)^2 \\
          &= v_1du_1+v_2\{-u_2du_2+u_1du_1+(1+u_2)du_2\}/(1-u_2) \\
          &= v_1du_1+v_2du_2+v_2\{u_1du_1+u_2du_2\}/(1-u_2) \\
          &= v_1du_1+v_2du_2
\end{align*}
as should, using $v_1u_1+v_2u_2=0,u_1^2+u_2^2=1,u_1du_1+u_2du_2=0$.

For the inverse mapping again suppose $n=1$. Then we have
\[ y_1x_1=\{v_1(1-u_2)+v_2u_1\}u_1/(1-u_2)=v_1u_1+v_2(1+u_2)=v_2 \]
using $u_1^2+u_2^2=1,v_1u_1+v_2u_2=0$, and therefore
\[ v_1=(y_1-v_2u_1)/(1-u_2)=y_1(x^2+1)/2-(y_1x_1)x_1 \]
as desired, using $(1-u_2)=2/(x^2+1),u_1/(1-u_2)=x_1$.
Since any diffeomorphism of manifolds induces a symplectomorphism of cotangent bundles the theorem follows.
\end{proof}

\begin{corollary}\label{stereographic metric formula}
The invariant metric on $\Sph^n$ transforms to
\[ v^2=(x^2+1)^2y^2/4 \]
under stereographic projection.
\end{corollary}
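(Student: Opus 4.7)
The plan is to directly substitute the stereographic projection formulas from Theorem \ref{stereographic projection formulas} into the expression for the cometric. The round metric on $\Sph^n$ induces the cometric $v^2 = \sum_{k=1}^{n+1} v_k^2$ on $T^{\ast}\Sph^n$ (the constraint $\vect{u}\cdot\vect{v}=0$ is automatic along the image of the projection), and I want to show this equals $(x^2+1)^2 y^2/4$ where $y^2 = \sum_{k=1}^{n} y_k^2$.

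First I would write $v^2 = \sum_{k=1}^{n} v_k^2 + v_{n+1}^2$ and plug in $v_k = (x^2+1)y_k/2 - (\vect{x}\cdot\vect{y})x_k$ together with $v_{n+1} = \vect{x}\cdot\vect{y}$. Expanding the square of $v_k$ produces three terms: a pure $(x^2+1)^2 y_k^2/4$ piece, a cross term $-(x^2+1)(\vect{x}\cdot\vect{y})x_k y_k$, and a pure $(\vect{x}\cdot\vect{y})^2 x_k^2$ piece. Summing over $k$ from $1$ to $n$ turns these into $(x^2+1)^2 y^2/4$, $-(x^2+1)(\vect{x}\cdot\vect{y})^2$, and $(\vect{x}\cdot\vect{y})^2 x^2$ respectively.

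Adding the $v_{n+1}^2 = (\vect{x}\cdot\vect{y})^2$ contribution, the coefficient of $(\vect{x}\cdot\vect{y})^2$ becomes $-(x^2+1) + x^2 + 1 = 0$, so everything except the leading term cancels and the identity $v^2 = (x^2+1)^2 y^2/4$ drops out. There is no real obstacle here: the only thing to watch is the bookkeeping of the cross term, which vanishes against the sum of the last-coordinate contribution and the $x^2$-weighted piece.
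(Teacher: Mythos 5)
Your computation is correct and is exactly the paper's proof: expand $v^2=\sum_1^n v_k^2+v_{n+1}^2$ using the formulas for $v_k$ and $v_{n+1}$ from Theorem \ref{stereographic projection formulas}, and observe that the coefficient of $(\vect{x}\cdot\vect{y})^2$, namely $x^2-(x^2+1)+1$, vanishes. No further comment is needed.
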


\begin{proof}
Indeed the relation
\[ v_k=(x^2+1)y_k/2-(\vect{x}\cdot\vect{y})x_k \;,\; v_{n+1}=\vect{x}\cdot\vect{y} \]
for $k=1\cdots,n$ gives 
\[ v^2=(x^2+1)^2y^2/4+(\vect{x}\cdot\vect{y})^2x^2-(x^2+1)(\vect{x}\cdot\vect{y})^2+(\vect{x}\cdot\vect{y})^2 \]
and the formula follows.
\end{proof}

Consider the function $F(\vect{u},\vect{v})=v^2/2$ on the cotangent bundle $T^{\ast}\Sph^n$ of the sphere $\Sph^n$.
The trajectories of the Hamiltonian flow of $F$ on the level hypersurface $F=1/2$ in $T^{\ast}\Sph^n$ 
project onto great circles on $\Sph^n$ traversed in arc length time $s$ with period $2\pi$.
By the above corollary the trajectories of the Hamiltonian flow on $T^{\ast}\R^n$ of the function 
\[ F(\vect{x},\vect{y})=(x^2+1)^2y^2/8 \]
are images under stereographic projection of the trajectories of this geodesic flow on $T^{\ast}\Sph^n$. 
Consider the two related functions
\begin{eqnarray*}
   G(\vect{x},\vect{y})=\sqrt{2F(\vect{x},\vect{y})}-1=(x^2+1)y/2-1 \\
   \hat{H}(\vect{x},\vect{y})=G(\vect{x},\vect{y})/y-1/2=x^2/2-1/y
\end{eqnarray*}
on $T^{\ast}\R^n$.
The Hamiltonian vector fields of $F$ and $G$ on $T^{\ast}\R^n$ coincide on the level hypersurface 
$F=1/2$ (or equivalently $G=0$ or $\hat{H}=-1/2$), since the derivative 
\[ \{r\mapsto\sqrt{2r}-1\}'=\{r\mapsto1/\sqrt{2r}\} \]
is equal to $1$ for $r=1/2$. 
Hence we have proved the following regularization theorem of Moser \cite{Moser}.

\begin{theorem}\label{Moser regularization theorem}
On the level hypersurface $F=1/2$ the trajectories of the Hamiltonian flow of the function 
$F(\vect{u},\vect{v})=v^2/2$ on $T^{\ast}(\Sph^n-\{\vect{n}\})$ traversed in time $s$ equal to arc length
transform under inverse stereographic projection to trajectories of the Hamiltonian flow 
of the function $\hat{H}(\vect{x},\vect{y})=x^2/2-1/y$ traversed in real time $t$
on the level hypersurface $\hat{H}=-1/2$ with 
\[ \frac{ds}{dt}=\frac{1}{y} \]
relating the two time parameters $s$ and $t$.
\end{theorem}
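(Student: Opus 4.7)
The plan is to assemble three ingredients already laid out before the theorem: the canonical nature of inverse stereographic projection (Theorem~\ref{stereographic projection formulas}), the pulled-back metric formula (Corollary~\ref{stereographic metric formula}), and a two-step level-set reparametrization that replaces $F$ first by $G$ and then by $\hat H$ while keeping track of how the time parameter scales.

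First I would push the geodesic Hamiltonian $F(\vect u,\vect v)=v^2/2$ to $T^{\ast}\R^n$ via inverse stereographic projection. Because that map is a symplectomorphism (Theorem~\ref{stereographic projection formulas}), the trajectories of $X_F$ on $T^{\ast}(\Sph^n\setminus\{\vect n\})$ parametrized by arc length $s$ correspond to trajectories of the Hamiltonian flow of the pulled-back function, which by Corollary~\ref{stereographic metric formula} equals $(x^2+1)^2y^2/8$, still parametrized by $s$; the level set $F=1/2$ is preserved. Next I would replace this Hamiltonian by $G=\sqrt{2F}-1=(x^2+1)y/2-1$: since $(d/dF)\sqrt{2F}=1/\sqrt{2F}$ equals $1$ precisely on $\{F=1/2\}$, we have $dG=dF$ on this hypersurface, so $X_G=X_F$ there and the flows agree with the same time $s$.

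The remaining step is to pass from $G$ to $\hat H$. Rewriting $\hat H = G/y-1/2$ as $G=y(\hat H+1/2)$ shows that the three level sets $\{F=1/2\}$, $\{G=0\}$ and $\{\hat H=-1/2\}$ coincide. Differentiating gives $dG = y\,d\hat H+(\hat H+1/2)\,dy$, and on the common level set the second term vanishes, yielding $X_G = y\,X_{\hat H}$ pointwise. Hence the trajectories of $X_G$ (parameter $s$) and of $X_{\hat H}$ (parameter $t$) trace the same curves. If $z(s)$ denotes a trajectory of $X_G$ and $z(s(t))$ the same orbit reparametrized to follow $X_{\hat H}$, then $dz/dt = (dz/ds)(ds/dt) = y\,X_{\hat H}\,(ds/dt)$, which must equal $X_{\hat H}$; this forces $ds/dt = 1/y$ as claimed.

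The only genuinely delicate point is this last one: the direction of the time rescaling is easy to get wrong, so I would derive it carefully from the identity $X_G = y\,X_{\hat H}$ via the chain rule rather than quote it. Everything else is bookkeeping once the symplectic invariance of stereographic projection and the pulled-back metric formula are in place.
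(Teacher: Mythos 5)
Your proposal is correct and follows essentially the same route as the paper: transport of the geodesic Hamiltonian via the symplectomorphism of Theorem~\ref{stereographic projection formulas} and Corollary~\ref{stereographic metric formula}, the observation that $X_F=X_G$ on $\{F=1/2\}$ because $(\sqrt{2r}-1)'=1$ at $r=1/2$, and the rescaling $X_G=y\,X_{\hat H}$ on the common level set. The only difference is that you spell out the final step $dG=y\,d\hat H+(\hat H+1/2)\,dy$ and the resulting chain-rule derivation of $ds/dt=1/y$, which the paper leaves implicit; your derivation of it is correct.
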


If we set $\vect{x}=\vect{p},\vect{y}=-\vect{q}$ then $(\vect{q},\vect{p})\mapsto(\vect{x},\vect{y})$
is a canonical transformation of $\R^{2n}$, called the geometric Fourier transform, and  
\[ H(\vect{q},\vect{p})=\hat{H}(\vect{x},\vect{y})=p^2/2-1/q \] 
becomes the Kepler Hamiltonian. 

\begin{corollary}\label{Moser map}
The Moser regularization map $\Phi_M: T^{\ast}\R^n \rightarrow T^{\ast}(\Sph^n-\{\vect{n}\})$ is defined
as the composition of stereographic projection with geometric Fourier transform.
It is a symplectomorphism and explicitly given by the formula 
$(\vect{q},\vect{p}) \mapsto \Phi_M(\vect{q},\vect{p})=(\vect{u},\vect{v})$ with
\[ \vect{u}=(2\vect{p}/(p^2+1),2p^2/(p^2+1)-1)\;,\;
   \vect{v}=(-(p^2+1)\vect{q}/2+(\vect{q}\cdot\vect{p})\vect{p},-\vect{q}\cdot\vect{p}) \]
as is clear from Theorem{\;\ref{stereographic projection formulas}}. 
On the level hypersurface $H=-1/2$ the Moser map transforms the Kepler flow with time $t$
to the geodesic flow on $\Sph^n-\{\vect{n}\}$ with arc length time $s$,
with the infinitesimal Kepler equation
\[ \frac{ds}{dt}=\frac{1}{q} \]
relating the two different time parameters.
\end{corollary}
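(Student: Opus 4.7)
The plan is to assemble the corollary by composing two symplectomorphisms already in hand. First I would verify that the geometric Fourier transform $(\vect{q},\vect{p})\mapsto(\vect{x},\vect{y})=(\vect{p},-\vect{q})$ is a symplectomorphism of $\R^{2n}$. This is immediate since
\[
\sum_1^n dx_k\wedge dy_k=\sum_1^n dp_k\wedge d(-q_k)=\sum_1^n dq_k\wedge dp_k,
\]
so the canonical form is preserved. Combining this with the symplectic property of stereographic projection established in Theorem~\ref{stereographic projection formulas}, the composition $\Phi_M$ is a symplectomorphism from $T^{\ast}\R^n$ (with the $(\vect{q},\vect{p})$-symplectic form) to $T^{\ast}(\Sph^n-\{\vect{n}\})$.

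Next I would derive the explicit formula by direct substitution of $\vect{x}=\vect{p}$ and $\vect{y}=-\vect{q}$ into the stereographic projection formulas of Theorem~\ref{stereographic projection formulas}. The $\vect{u}$-components follow immediately, writing $u_{n+1}=(p^2-1)/(p^2+1)=2p^2/(p^2+1)-1$. For the covector one finds
\[
v_k=\tfrac{1}{2}(p^2+1)(-q_k)-(\vect{p}\cdot(-\vect{q}))p_k=-\tfrac{1}{2}(p^2+1)q_k+(\vect{q}\cdot\vect{p})p_k,
\]
and $v_{n+1}=\vect{x}\cdot\vect{y}=-\vect{q}\cdot\vect{p}$, matching the statement.

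For the dynamical assertion I would observe that since $x=p$ and $y=|-\vect{q}|=q$, the Fourier pull-back of $\hat H(\vect{x},\vect{y})=x^2/2-1/y$ is exactly the Kepler Hamiltonian $H(\vect{q},\vect{p})=p^2/2-1/q$, and the level hypersurface $H=-1/2$ corresponds to $\hat H=-1/2$. Because the Fourier transform is canonical it intertwines the Hamiltonian flow of $H$ in $t$ with the Hamiltonian flow of $\hat H$ in the same time $t$. Theorem~\ref{Moser regularization theorem} then says that stereographic projection sends the latter to the geodesic flow on $\Sph^n-\{\vect{n}\}$ parametrized by arc length $s$, with $ds/dt=1/y$. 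Since $y=q$, this becomes the infinitesimal Kepler equation $ds/dt=1/q$.

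The proof is essentially a bookkeeping exercise built on results already proved; the only point requiring genuine attention is the sign tracking in the $\vect{v}$-formula and the verification that the Fourier substitution $\vect{y}=-\vect{q}$ (rather than $+\vect{q}$) is the one that keeps the symplectic form invariant and converts $\hat H$ into the standard Kepler Hamiltonian with the correct sign of the potential.
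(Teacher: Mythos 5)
Your proposal is correct and follows essentially the same route as the paper: the paper's justification is precisely the substitution $(\vect{x},\vect{y})=(\vect{p},-\vect{q})$ into Theorem~\ref{stereographic projection formulas}, the observation that this substitution is canonical and turns $\hat H$ into the Kepler Hamiltonian, and an appeal to Theorem~\ref{Moser regularization theorem} for the time reparametrization $ds/dt=1/y=1/q$. Your sign-checks in the $\vect{v}$-formula and in the verification that $\sum dx_k\wedge dy_k=\sum dq_k\wedge dp_k$ are exactly the bookkeeping the paper leaves to the reader.
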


In this corollary the equivalence of the Kepler problem with the geodesic flow of the sphere
is established only on the energy hypersurface $H=-1/2$. The general case of negative energy $H<0$ can be
reduced to this case by a scaling of variables with $\R_+=\{\rho>0\}$ according to
\[ \vect{q}\mapsto\rho^2\vect{q}\;,\;\vect{p}\mapsto\rho^{-1}\vect{p}\;,\;H\mapsto\rho^{-2}H\;,\;t\mapsto\rho^3t \]
and the symplectic form $\omega=\sum dq_k \wedge dp_k$ scales according to $\omega\mapsto\rho\omega$.
This ends our discussion of the Moser regularization.

\section{Regularization after Ligon and Schaaf}

Let us fix some notations. 
The phase space $T^{\ast}\R^n$ of the Kepler problem with 
canonical coordinates $(\vect{q},\vect{p})$ will be denoted by $P$.
Let us denote
\begin{eqnarray*}
P_- &=& \{(\vect{q},\vect{p})\in P;\vect{q}\neq \vect{o},H(\vect{q},\vect{p})<0\} \\
P_{-1/2} &=& \{(\vect{q},\vect{p})\in P;\vect{q}\neq \vect{o},H(\vect{q},\vect{p})=-1/2\}   
\end{eqnarray*}
with $H(\vect{q},\vect{p})=p^2/2-1/q$ the Kepler Hamiltonian as before.
Likewise let the cotangent bundle 
$T^{\ast}\Sph^n=\{(\vect{u},\vect{v})\in T^{\ast}\R^{n+1};u=1,\vect{u}\cdot\vect{v}=0\}$ 
of the sphere will be denoted by $T$, and put
\begin{eqnarray*}
T^{\times} &=& \{(\vect{u},\vect{v})\in T; v \neq 0\} \\
T_-        &=& \{(\vect{u},\vect{v})\in T; \vect{u}\neq\vect{n},v \neq 0\} \\
T_{-1/2}   &=& \{(\vect{u},\vect{v})\in T; \vect{u}\neq\vect{n},v=1\}
\end{eqnarray*}
for the various submanifolds of $T$. 

\begin{definition}\label{Delaunay definition}
The Delaunay Hamiltonian $\tilde{H}$ is defined by
\[ \tilde{H}(\vect{u},\vect{v})=-\frac{1}{2v^2} \]
on the punctured cotangent bundle $T^{\times}$ of $\Sph^n$.
\end{definition}

It is clear that the flow of the Delaunay Hamiltonian has a similar scaling symmetry
\[ \vect{u}\mapsto\vect{u},\vect{v}\mapsto\rho\vect{v},\tilde{H}\mapsto\rho^{-2}\tilde{H},t\mapsto\rho^3t\]
as the Kepler problem, and likewise the symplectic form $\tilde{\omega}=\sum du_k \wedge dv_k$ 
scales according to $\tilde{\omega}\mapsto\rho\tilde{\omega}$.

\begin{remark}
The Moser fibration $\Pi_M: P_- \rightarrow T_{-1/2}$ with $\Pi_M(\vect{q},\vect{p})=(\vect{u},\vect{v})$ 
is defined by
\[ \vect{u}=(\sqrt{-2H}q\vect{p},qp^2-1)\;,\;
   \vect{v}=(-\vect{q}/q+(\vect{q}\cdot\vect{p})\vect{p},-\sqrt{-2H}\vect{q}\cdot\vect{p})\;. \]
Note that $\Pi_M$ is invariant under the action of the scale group $\R_+$ on $P_-$.
On the submanifold $P_{-1/2}$ we have $(p^2+1)/2=1/q$, and therefore
\[ \Phi_M(\vect{q},\vect{p})=\Pi_M(\vect{q},\vect{p}) \]
on $P_{-1/2}$ by a direct comparison with the formula for the Moser map $\Phi_M$ in Corollary{\;\ref{Moser map}}. 
In turn this equality implies that $\Pi_M$ is indeed a smooth fibration with fibers the orbits of the scale group $\R_+$.
\end{remark}

\begin{definition}\label{Ligon-Schaaf definition}
The Ligon-Schaaf regularization map $\Phi_{LS}:P_- \rightarrow T_-$ 
is defined by $\Phi_{LS}(\vect{q},\vect{p})=(\vect{r},\vect{s})$ with
\[ \vect{r}=((\cos v_{n+1})\vect{u}+(\sin v_{n+1})\vect{v}) \;,\;
   \vect{s}=((-\sin v_{n+1})\vect{u}+(\cos v_{n+1})\vect{v})/\sqrt{-2H} \]
with $\vect{u},\vect{v}\in\Sph^n\subset\R^{n+1}$ given by
\[ \vect{u}=(\sqrt{-2H}q\vect{p},qp^2-1)\;,\;
   \vect{v}=(-\vect{q}/q+(\vect{q}\cdot\vect{p})\vect{p},-\sqrt{-2H}(\vect{q}\cdot\vect{p})) \]
as the components of the Moser fibration $\Pi_M(\vect{q},\vect{p})=(\vect{u},\vect{v})$.
\end{definition}

By definition the Ligon-Schaaf map is equivariant for the action of the scale group $\R_+$
on $P_-$ and $T^{\times}$, which in turn is equivalent to $\Phi_{LS}^{\ast}\tilde{H}=H$. 

For $(\vect{u},\vect{v}) \in T_{-1/2}$ the vectors $\vect{u},\vect{v}$ form an orthonormal basis
in the plane $\R\vect{u}+\R\vect{v}$. If we define a complex structure on this plane by
\[ i\vect{u}=\vect{v}\;,\;i\vect{v}=-\vect{u} \]
then the Ligon-Schaaf map can be written in the compact form
\[ \vect{r}=e^{iv_{n+1}}\vect{u}\;,\;\vect{s}=e^{iv_{n+1}}\vect{v}/\sqrt{-2H}\;. \]

We claim that for $(\vect{q},\vect{p}) \in P_{-1/2}$ and $(\vect{u},\vect{v}),(\vect{r},\vect{s}) \in T_{-1/2}$
related as above the following three differential equations
\begin{eqnarray*}
   \frac{d\vect{q}}{dt}=\vect{p}  &,& \frac{d\vect{p}}{dt}=-\vect{q}/q^3 \\
   \frac{d\vect{u}}{ds}=\vect{v} &,& \frac{d\vect{v}}{ds}=-\vect{u} \\
   \frac{d\vect{r}}{dt}=\vect{s} &,& \frac{d\vect{s}}{dt}=-\vect{r}
\end{eqnarray*}
are equivalent with time parameters $t$ and $s$ related by $ds/dt=1/q$. 
The first two of these are equivalent under the Moser map $\Phi_M$ by Corollary{\;\ref{Moser map}}.
The equivalence with the third one follows from
\[ \frac{d\vect{r}}{dt}=i\frac{dv_{n+1}}{dt}\vect{r}+e^{iv_{n+1}}\frac{ds}{dt}\frac{d\vect{u}}{ds}
   =(-p^2+1/q)\vect{s}+\vect{s}/q=(-2H)\vect{s}=\vect{s} \]
and likewise for $\vect{s}=i\vect{r}$. 
This shows that the first and the third equation are equivalent under the Ligon-Schaaf map $\Phi_{LS}$.
Using that the Kepler and the Delaunay Hamiltonians have the same scale symmetry 
and that $\Phi_{LS}$ is equivariant for the two scale symmetries it follows that
$\Phi_{LS}$ intertwines the Kepler flow on $P_-$ and the Delaunay flow on $T_-$.

Since $\Phi_{LS}^{\ast}\tilde{H}=H$ it follows that $\Phi_{LS}:P_- \rightarrow T_-$ is a symplectomorphism.
Indeed $\Phi_{LS}$ is obtained from the canonical Moser map $\Phi_M: P \rightarrow T$
by a canonical modification along the ruled surfaces with base the Kepler orbits in $P_{-1/2}$ and rulings the scale action.
For such a ruled surface $S$ in $P_-$ the form $\omega|_{S}=dt \wedge dH$ is mapped
under $\Phi_{LS}$ to the form $\tilde{\omega}|_{\tilde{S}}=dt \wedge d\tilde{H}$ on $\tilde{S}=\Phi_{LS}(S)$.
This proves the following result of Ligon and Schaaf \cite{Ligon-Schaaf}.

\begin{theorem}\label{Ligon-Schaaf theorem}
The Ligon-Schaaf regularization map $\Phi_{LS}: P_- \rightarrow T_-$ is a symplectomorphism
intertwining the Kepler flow on $P_-$ with the Delaunay flow on $T_-$ (with respect to the same time parameter $t$).
\end{theorem}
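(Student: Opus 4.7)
The plan is to split the statement into three sub-goals: the Hamiltonian identity $\Phi_{LS}^{\ast}\tilde{H}=H$, the flow intertwining, and the symplectomorphism property. The first two are dynamical and follow cleanly from the preceding material; the third is the genuinely new content.

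For the Hamiltonian identity, I would note that the formula for $\Phi_{LS}$ applies a norm-preserving rotation in the plane $\R\vect{u}+\R\vect{v}$ and rescales $\vect{v}$ by $1/\sqrt{-2H}$; since $v^{2}=1$ for $\Pi_M$-image points in $T_{-1/2}$, this gives $s^{2}=-1/(2H)$ and hence $\tilde{H}(\vect{r},\vect{s})=H$. This identity is equivalent to the fact that $\Phi_{LS}$ intertwines the scaling groups on $P_-$ and $T^{\times}$, which is manifest from the definition.

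For the intertwining of flows I would reduce to the hypersurface $P_{-1/2}$ and use the common scaling law to lift. On $P_{-1/2}$ Moser's theorem (Corollary\;\ref{Moser map}) gives $d\vect{u}/ds=\vect{v}$ and $d\vect{v}/ds=-\vect{u}$ via $\Pi_M=\Phi_M$, with $ds/dt=1/q$. Differentiating $\vect{r}=e^{iv_{n+1}}\vect{u}$ in real time $t$ and using $dv_{n+1}/dt=1/q-p^{2}$ (from the Kepler equations together with $\sqrt{-2H}=1$ on $P_{-1/2}$), I would exploit the on-shell identity $p^{2}+1=2/q$ to cancel the unwanted rotation term and obtain $d\vect{r}/dt=\vect{s}$; the companion equation $d\vect{s}/dt=-\vect{r}$ is the twin calculation for $\vect{s}=i\vect{r}$. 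Because these are the Hamiltonian equations of $\tilde{H}$ at energy $-1/2$, and because the equivariance of $\Phi_{LS}$ matches the common scaling law of the two flows, the intertwining extends from $P_{-1/2}$ to all of $P_-$ in the same time variable $t$.

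The main obstacle is the symplectomorphism property, since the rotation angle $v_{n+1}$ is not a function of $\tilde{H}$ alone and a naive Hamiltonian-flow argument would fail. My strategy is to exploit the fact that $\Phi_M: P\to T$ is already canonical, and that $\Phi_{LS}$ modifies $\Phi_M$ only along the two-dimensional ruled surfaces $S\subset P_-$ whose bases are Kepler orbits in $P_{-1/2}$ and whose rulings are the scale rays. On any such $S$ the ambient form restricts to $\omega|_{S}=dt\wedge dH$ (since the Kepler vector field $X_H$ restricts to $\partial_t$ on $S$ and $i_{X_H}\omega=dH$), and similarly $\tilde{\omega}|_{\tilde{S}}=dt\wedge d\tilde{H}$ on $\tilde{S}=\Phi_{LS}(S)$. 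These agree under $\Phi_{LS}$ because $\Phi_{LS}^{\ast}\tilde{H}=H$ and because Kepler time matches Delaunay time by the previous step. In the directions transverse to $S$, $\Phi_{LS}$ coincides with $\Phi_M$ up to a rotation tangent to $S$, so the mixed pairings of $\omega$ remain untouched; combining these two pieces of data yields $\Phi_{LS}^{\ast}\tilde{\omega}=\omega$ on all of $P_-$.
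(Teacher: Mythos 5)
Your proposal follows the paper's own argument in all three steps: the identity $\Phi_{LS}^{\ast}\tilde{H}=H$ via the scale equivariance (equivalently the direct computation $s^{2}=1/(-2H)$), the intertwining of the flows by differentiating $\vect{r}=e^{iv_{n+1}}\vect{u}$ along Kepler orbits in $P_{-1/2}$ and extending to $P_-$ by the common scaling symmetry, and the symplectomorphism property via the ruled surfaces with $\omega|_{S}=dt\wedge dH$ carried to $\tilde{\omega}|_{\tilde{S}}=dt\wedge d\tilde{H}$. This is essentially the same approach as the paper (including the same level of brevity in the final symplectic-form step), and it is correct.
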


The regularization is obtained by the partial compactification $T_- \hookrightarrow T^{\times}$.
The incomplete Kepler flow of the collision orbits on $P_-$ is regularized under $\Phi_{LS}$ 
by $T_- \hookrightarrow T^{\times}$ to the complete Delaunay flow on $T^{\times}$. 
For this reason the punctured cotangent bundle $T^{\times}$ of $\Sph^n$ is called the Kepler manifold \cite{Souriau}.
We now move on to discuss the symmetry of the Kepler problem. 

\section{The symmetry group}

If we write $L_{ij}=q_ip_j-q_jp_i$ for the components of angular momentum $\vect{L}=\vect{q}\wedge\vect{p}$
on the phase space $\R^{2n}$ then $\{L_{ij},L_{jk}\}=L_{ki}$ while $\{L_{ij},L_{kl}\}=0$ if $\#\{i,j,k,l\} \neq 3$.
These are the commutation relations for the Lie algebra $\mathfrak{so}(n)$.

\begin{theorem}
On the negative energy part $P_-$ of phase space put
\[ L_{i(n+1)}=-L_{(n+1)i}=K_i/\sqrt{-2H} \]
with $K_i$ the components of the Lenz vector
\[ \vect{K}=(p^2-1/q)\vect{q}-(\vect{q}\cdot\vect{p})\vect{p}\;. \]
If we write $M_{ij}=r_is_j-r_js_i$ for the components of the angular momentum $\vect{M}=\vect{r}\wedge\vect{s}$
on the phase space $\R^{2(n+1)}$ then
\[ \Phi_{LS}^{\ast}M_{ij}=L_{ij} \]
for $i,j=1,\cdots,n+1$. 
In particular $\{L_{ij},L_{jk}\}=L_{ki}$ and $\{L_{ij},L_{kl}\}=0$ if $\#\{i,j,k,l\} \neq 3$ for $i,j,k,l=1,\cdots,n+1$.
The conclusion is that the Ligon-Schaaf regularization map
\[ \Phi_{LS}:P_-\rightarrow T_-  \]
intertwines the two infinitesimal Hamiltonian actions of $\mathfrak{so}(n+1)$.
\end{theorem}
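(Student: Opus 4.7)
The central observation is that the Ligon-Schaaf map acts as a rotation within the $2$-plane $\R\vect{u}+\R\vect{v}\subset\R^{n+1}$: writing $a=\cos v_{n+1}$, $b=\sin v_{n+1}$ we have $\vect{r}=a\vect{u}+b\vect{v}$ and $\sqrt{-2H}\,\vect{s}=-b\vect{u}+a\vect{v}$. A one-line expansion (in which the cross terms cancel by $a^2+b^2=1$) gives
\[ r_is_j-r_js_i=\frac{u_iv_j-u_jv_i}{\sqrt{-2H}} \]
for \emph{all} indices $i,j=1,\dots,n+1$. So the problem reduces to computing $u_iv_j-u_jv_i$ with $(\vect{u},\vect{v})$ the Moser fibration components from Definition~\ref{Ligon-Schaaf definition}.

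For $i,j\le n$, substituting $u_i=\sqrt{-2H}\,qp_i$ and $v_i=-q_i/q+(\vect{q}\cdot\vect{p})p_i$, the terms involving $p_ip_j$ cancel by antisymmetry and what remains is $\sqrt{-2H}(q_ip_j-q_jp_i)=\sqrt{-2H}\,L_{ij}$. Dividing by $\sqrt{-2H}$ produces exactly $L_{ij}$, so $\Phi_{LS}^{\ast}M_{ij}=L_{ij}$ on this index range.

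For the mixed case $i\le n$, $j=n+1$, I would insert $u_{n+1}=qp^2-1$ and $v_{n+1}=-\sqrt{-2H}(\vect{q}\cdot\vect{p})$ and use the on-the-shell identity $-2H=2/q-p^2$, equivalently $2Hq=qp^2-2$, to combine the contributions. The coefficient of $p_i(\vect{q}\cdot\vect{p})$ collapses as $(qp^2-2)-(qp^2-1)=-1$, while the $q_i$-terms assemble into $(p^2-1/q)q_i$; the result is precisely the component $K_i$ of the Lenz vector, so
\[ \Phi_{LS}^{\ast}M_{i(n+1)}=\frac{u_iv_{n+1}-u_{n+1}v_i}{\sqrt{-2H}}=\frac{K_i}{\sqrt{-2H}}=L_{i(n+1)} \]
by definition. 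The remaining statements are now automatic: on $T$ the functions $M_{ij}$ obviously satisfy the $\mathfrak{so}(n+1)$-brackets $\{M_{ij},M_{jk}\}=M_{ki}$ and $\{M_{ij},M_{kl}\}=0$ for $\#\{i,j,k,l\}\neq 3$, and since $\Phi_{LS}:P_-\to T_-$ is a symplectomorphism by Theorem~\ref{Ligon-Schaaf theorem} it preserves Poisson brackets, so the pulled-back generators $L_{ij}$ satisfy the same relations. The identity $\Phi_{LS}^{\ast}M_{ij}=L_{ij}$ across the full index range $i,j\le n+1$ is precisely the intertwining of the two infinitesimal Hamiltonian $\mathfrak{so}(n+1)$-actions.

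The only step that is not bookkeeping is the mixed-index computation: both $u_iv_{n+1}$ and $u_{n+1}v_i$ contribute $p_i(\vect{q}\cdot\vect{p})$ terms, and it is the energy relation $2H=p^2-2/q$ that makes these terms partially cancel and leave the Lenz vector behind. This is the hidden arithmetic origin of why the Lenz vector is the correct completion of angular momentum from $\mathfrak{so}(n)$ to $\mathfrak{so}(n+1)$.
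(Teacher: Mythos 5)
Your proof is correct and follows essentially the same route as the paper: direct substitution of the explicit Ligon--Schaaf formulas, with the energy relation $2Hq=qp^2-2$ doing the work in the mixed-index case. Your preliminary observation that the rotation by $v_{n+1}$ in the plane $\R\vect{u}+\R\vect{v}$ preserves the wedge, so that $r_is_j-r_js_i=(u_iv_j-u_jv_i)/\sqrt{-2H}$ for all $i,j$, is a tidy way of packaging the $\cos^2+\sin^2=1$ cancellations that the paper instead carries out longhand inside each of the two big substitutions.
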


\begin{proof}
Using the formula for $\Phi_{LS}$ of Definition{\;\ref{Ligon-Schaaf definition}} 
the relation $\Phi_{LS}^{\ast}M_{ij}=L_{ij}$ is a rather straightforward exercise in substition.

Indeed for $i,j=1,\cdots,n$ and $M_{ij}=r_is_j-r_js_i$ the expression for $\Phi_{LS}^{\ast}M_{ij}$ 
at $(\vect{q},\vect{p})$ is equal to
\begin{eqnarray*}
&=& \{(\cos v_{n+1})(\sqrt{-2H}qp_i)+(\sin v_{n+1})(-q_i/q+(\vect{q}\cdot\vect{p})p_i)\} \\
& & \times \{(-\sin v_{n+1})(\sqrt{-2H}qp_j)+(\cos v_{n+1})(-q_j/q+(\vect{q}\cdot\vect{p})p_j)\}/\sqrt{-2H} \\
& & -\{i \leftrightarrow j \} \\
&=& \{(\cos  v_{n+1}\sin v_{n+1})(-q^2p_ip_j)\}\sqrt{-2H} \\
& & +\{(\cos^2 v_{n+1})(-p_iq_j+q(\vect{q}\cdot\vect{p})p_ip_j)+(\sin^2 v_{n+1})(q_ip_j-q(\vect{q}\cdot\vect{p})p_ip_j)\} \\
& & +\{(\cos  v_{n+1}\sin v_{n+1})(-q_i/q+(\vect{q}\cdot\vect{p})p_i)(-q_j/q+(\vect{q}\cdot\vect{p})p_j)\}/\sqrt{-2H} \\
& & - \{ i \leftrightarrow j \} \\
&=& \{\cos^2 v_{n+1}(q_ip_j-q_jp_i)+\sin^2 v_{n+1}(q_ip_j-q_jp_i)\}=(q_ip_j-q_jp_i)=L_{ij}
\end{eqnarray*}
with $\{i \leftrightarrow j\}$ denoting the same expression with $i$ and $j$ interchanged.
Of course this formula is also obvious because our geometric construction of the
Ligon-Schaaf regularization map is equivariant for the action of $\mathfrak{so}(n)$.

Likewise for $i=1,\cdots,n$ and $M_{i(n+1)}=r_is_{n+1}-r_{n+1}s_i$ the expression 
for $\Phi_{LS}^{\ast}M_{i(n+1)}$ at $(\vect{q},\vect{p})$ is equal to
\begin{eqnarray*}
&=& \{(\cos v_{n+1})(\sqrt{-2H}qp_i)+(\sin v_{n+1})(-q_i/q+(\vect{q}\cdot\vect{p})p_i)\} \\
& & \times \{(-\sin v_{n+1})(qp^2-1)+(\cos v_{n+1})(-\sqrt{-2H}(\vect{q}\cdot\vect{p}))\}/\sqrt{-2H} \\
& & -\{(\cos v_{n+1})(qp^2-1)+(\sin v_{n+1})(-\sqrt{-2H}(\vect{q}\cdot\vect{p}))\} \\
& & \times\{(-\sin v_{n+1})(\sqrt{-2H}qp_i)+(\cos v_{n+1})(-q_i/q+(\vect{q}\cdot\vect{p})p_i)\}/\sqrt{-2H} \\
&=& \{(-\cos v_{n+1}\sin v_{n+1}+\cos v_{n+1}\sin v_{n+1})(qp^2-1)qp_i\} \\
& & +\{(-\sin v_{n+1}\cos v_{n+1}+\sin v_{n+1}\cos v_{n+1})(\vect{q}\cdot\vect{p})(-q_i/q+(\vect{q}\cdot\vect{p})p_i)\} \\
& & +\{(\cos^2 v_{n+1}+\sin^2 v_{n+1})(2Hq(\vect{q}\cdot\vect{p})p_i)\}/\sqrt{-2H} \\
& & +\{(-\sin^2 v_{n+1}-\cos^2 v_{n+1})(qp^2-1)(-q_i/q+(\vect{q}\cdot\vect{p})p_i)\}/\sqrt{-2H} \\
&=& \{2Hq(\vect{q}\cdot\vect{p})p_i-(qp^2-1)(-q_i/q+(\vect{q}\cdot\vect{p})p_i)\}/\sqrt{-2H} \\
&=& \{(qp^2-2)(\vect{q}\cdot\vect{p})p_i+(qp^2-1)q_i/q-(qp^2-1)(\vect{q}\cdot\vect{p})p_i\}/\sqrt{-2H} \\
&=& \{(p^2-1/q)q_i-(\vect{q}\cdot\vect{p})p_i\}/\sqrt{-2H}=K_i/\sqrt{-2H}=L_{i(n+1)}
\end{eqnarray*}
which proves the theorem.
\end{proof}

\begin{corollary}
On $P_-$ the Hamiltonian vector fields of the functions $L_{ij}$ for $i,j=1,\cdots,n+1$
integrate to an incomplete Hamiltonian action of the rotation group $\SO(n+1)$.
Moreover the Ligon-Schaaf map 
\[ \Phi_{LS}:P_- \rightarrow T_- \] 
intertwines this action with the standard complete Hamiltonian action
of $\SO(n+1)$ on the punctured cotangent bundle $T^{\times}$ of $\Sph^n$.
In particular $\Phi_{LS}$ intertwines the corresponding moment maps 
\[ \mu:P_- \rightarrow \mathfrak{so}(n+1)^{\ast}\;,\; \tilde{\mu}:T^{\times} \rightarrow \mathfrak{so}(n+1)^{\ast} \]
in the sense that $\mu(\vect{q},\vect{p})=\tilde{\mu}(\Phi_{LS}(\vect{q},\vect{p}))$. In turn this implies that
\[ \mu^2=L^2+K^2/(-2H)=1/(-2H) \]
because $\tilde{\mu}^2=1/(-2\tilde{H})$.
\end{corollary}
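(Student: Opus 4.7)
The plan is to deduce everything directly from the preceding theorem, using that $\Phi_{LS}$ is a symplectomorphism (Theorem \ref{Ligon-Schaaf theorem}) and that on $T^{\times}$ the group $\SO(n+1)$ already acts canonically by the lift of its linear action on $\Sph^n\subset\R^{n+1}$, whose moment map components are precisely the $M_{ij}$. The identity $\Phi_{LS}^{\ast}M_{ij}=L_{ij}$ combined with $\Phi_{LS}$ being symplectic gives $\Phi_{LS}^{\ast}X_{M_{ij}}=X_{L_{ij}}$, so the Hamiltonian vector fields $X_{L_{ij}}$ satisfy the $\mathfrak{so}(n+1)$ Poisson bracket relations and integrate, via $\Phi_{LS}$, to a local Hamiltonian $\SO(n+1)$-action on $P_-$. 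This action is in general incomplete because $\Phi_{LS}(P_-)=T_-$ is the strict open subset $T^{\times}\setminus\{\vect{u}=\vect{n}\}$ of $T^{\times}$: one-parameter subgroups of $\SO(n+1)$ carrying a point of $T_-$ into the forbidden fiber over $\vect{n}$ cannot be followed on $P_-$ for all time --- on the Kepler side these are precisely the collision orbits.

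Once the basis of $\mathfrak{so}(n+1)^{\ast}$ dual to the standard basis $\{e_i\wedge e_j\}$ of $\mathfrak{so}(n+1)$ is fixed, the intertwining statement $\mu=\tilde{\mu}\circ\Phi_{LS}$ is just the coordinatewise identity $\Phi_{LS}^{\ast}M_{ij}=L_{ij}$ read in reverse, so no extra work is required here.

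For the numerical identity I split the sum defining $\mu^2$ according to whether the second index equals $n+1$, obtaining
\[
\mu^2=\sum_{i<j\leq n+1}L_{ij}^2=\sum_{i<j\leq n}L_{ij}^2+\sum_{i=1}^{n}L_{i(n+1)}^2=L^2+K^2/(-2H)
\]
by the definition $L_{i(n+1)}=K_i/\sqrt{-2H}$. To evaluate $\mu^2$ as a function it now suffices, by the previous paragraph, to compute $\tilde{\mu}^2$ on $T^{\times}$ and pull back using $\Phi_{LS}^{\ast}\tilde{H}=H$. The Lagrange identity gives
\[
\tilde{\mu}^2=\sum_{i<j}(r_is_j-r_js_i)^2=r^2s^2-(\vect{r}\cdot\vect{s})^2=s^2,
\]
using $r=1$ and $\vect{r}\cdot\vect{s}=0$ on $T^{\times}$, while $\tilde{H}=-1/(2s^2)$ yields $\tilde{\mu}^2=1/(-2\tilde{H})$. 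Pulling back then gives $\mu^2=1/(-2H)$, completing the chain.

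No step here presents a real obstacle; essentially everything is a formal consequence of the preceding theorem plus the Lagrange identity for $\vect{r}\wedge\vect{s}$. The only point needing a word of care is the meaning of \emph{incomplete}: the $X_{L_{ij}}$ are genuine Hamiltonian vector fields on $P_-$ with the correct brackets, but their joint flow defines only a local $\SO(n+1)$-action, a feature forced by the strict inclusion $T_-\subsetneq T^{\times}$ rather than by any defect in the construction.
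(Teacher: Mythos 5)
Your argument is correct and follows exactly the route the paper intends: the corollary is left without proof there as an immediate consequence of $\Phi_{LS}^{\ast}M_{ij}=L_{ij}$ together with $\Phi_{LS}$ being a symplectomorphism, and your Lagrange-identity computation $\tilde{\mu}^2=r^2s^2-(\vect{r}\cdot\vect{s})^2=s^2=1/(-2\tilde{H})$ plus the pullback via $\Phi_{LS}^{\ast}\tilde{H}=H$ is precisely the intended justification of the final identity. Your remark on why the action is only incomplete (the $\SO(n+1)$-orbits in $T^{\times}$ leave $T_-$ through the fiber over $\vect{n}$, i.e.\ the collision orbits) is also the right explanation.
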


The incomplete Hamiltonian action of $\mathfrak{so}(n+1)$ on $P_-$ is regularized under $\Phi_{LS}$ by the partial
compactification $T_- \hookrightarrow T^{\times}$ to a complete Hamiltonian action of $\mathfrak{so}(n+1)$ on $T^{\times}$.
The fact that $\Phi_{LS}$ intertwines the two moment maps was the point of departure 
of Cushman and Duistermaat \cite{Cushman-Duistermaat}.
The next corollary is due to Gy\"orgyi \cite{Gyorgyi}, but on the more refined quantum level
it was obtained before by Pauli in his spectral analysis of the hydrogen atom \cite{Pauli}.

\begin{corollary}
On the phase space $\{(\vect{q},\vect{p})\in P;\vect{q}\neq\vect{o}\}$ of $\R^n-\{\vect{o}\}$ 
the components $K_i$ of the Lenz vector satisfy the Poisson bracket relations
\[ \{L_{ij},K_k\}=\delta_{ik}K_j-\delta_{jk}K_i \;,\; \{K_i,K_j\}=-2HL_{ij} \]
for $i,j,k=1,\cdots,n$. 
\end{corollary}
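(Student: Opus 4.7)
The plan is to exploit the theorem just proved together with the earlier fact that $\Phi_{LS}:P_- \to T_-$ is a symplectomorphism and hence preserves Poisson brackets. Since $\Phi_{LS}^{\ast} M_{ij} = L_{ij}$ for all $i,j=1,\ldots,n+1$, the standard $\mathfrak{so}(n+1)$ bracket relations among the $M_{ij}$ on $T_-$ transfer verbatim to relations among the $L_{ij}$ on $P_-$. Extracting the cases in which exactly one of the two indices equals $n+1$ yields, for $i,j,k \leq n$,
\[ \{L_{ij}, L_{k(n+1)}\} = \delta_{ik} L_{j(n+1)} - \delta_{jk} L_{i(n+1)}, \qquad \{L_{i(n+1)}, L_{j(n+1)}\} = L_{ij} \]
on $P_-$, after a short antisymmetry check from the convention $\{L_{ab},L_{bc}\}=L_{ca}$.

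To convert these into the desired identities I would substitute $L_{i(n+1)} = K_i/\sqrt{-2H}$ and move the scalar factor $\sqrt{-2H}$ outside of each bracket. This is legitimate provided $H$ Poisson-commutes with the other entry. For the first identity this is automatic, since $H$ is rotationally invariant, so that $\{L_{ij},H\}=0$ for $i,j \leq n$. For the second I need the classical conservation law $\{K_i,H\}=0$ for the Lenz vector. Rather than compute this by hand, I would obtain it from the preceding machinery: $\tilde{H}=-1/(2v^2)$ is manifestly $\SO(n+1)$-invariant on $T^{\times}$, so that $\{M_{i(n+1)},\tilde{H}\}=0$; pulling back under the symplectomorphism $\Phi_{LS}$ and using $\Phi_{LS}^{\ast}\tilde{H}=H$ gives $\{K_i/\sqrt{-2H},H\}=0$, and hence $\{K_i,H\}=0$, on $P_-$. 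Substituting and simplifying then delivers the two stated identities on $P_-$.

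The final step is to extend these equalities from the open set $P_-$ to the entire punctured phase space $\{\vect{q}\neq\vect{o}\}$ appearing in the statement. The functions $L_{ij}$, $K_i$ and $H$ are real-analytic on $\{\vect{q}\neq\vect{o}\}$ (the only non-polynomial ingredient being $1/q$, which is real-analytic away from the origin), so both sides of each identity are real-analytic there. Since $\{\vect{q}\neq\vect{o}\}$ is connected for $n\geq 2$ (and splits into two half-lines, on each of which the same argument applies, when $n=1$) and $P_-$ is a non-empty open subset, the identity principle for real-analytic functions forces both equalities to hold on all of $\{\vect{q}\neq\vect{o}\}$. I expect the main obstacle to be essentially bookkeeping: keeping track of signs in the $\mathfrak{so}(n+1)$ bracket relations when specialising to the index $n+1$, and verifying that the $\sqrt{-2H}$ really can be pulled out of the brackets; no further substantial calculation is needed.
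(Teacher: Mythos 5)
Your proof is correct and takes essentially the same route as the paper: the relations are obtained on $P_-$ from the theorem's $\mathfrak{so}(n+1)$ bracket relations for the $L_{ij}$ and then extended to $\{\vect{q}\neq\vect{o}\}$ by real-analytic continuation. You merely spell out details the paper leaves implicit, namely pulling the factor $\sqrt{-2H}$ out of the brackets using $\{L_{ij},H\}=0$ and $\{K_i,H\}=0$, the latter correctly deduced from the $\SO(n+1)$-invariance of $\tilde{H}$ via the symplectomorphism $\Phi_{LS}$.
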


\begin{proof}
These Poisson brackets hold on $P_-$ by the above theorem, and extend from $P_-$ to 
$\{(\vect{q},\vect{p})\in P;\vect{q}\neq\vect{o}\}$ as analytic identities.
\end{proof}

In turn this implies that on the positive energy part $P_+$ of phase space the symmetry algebra 
becomes the Lorentz algebra $\mathfrak{so}(n,1)$ rather than the orthogonal algebra $\mathfrak{so}(n+1)$.

\noindent
Gert Heckman, Radboud University Nijmegen, P.O. Box 9010,
\newline 
6500 GL Nijmegen, The Netherlands (E-mail: g.heckman@math.ru.nl)
\newline
Tim de Laat, Radboud University Nijmegen, P.O. Box 9010,
\newline 
6500 GL Nijmegen, The Netherlands (E-mail: tlaat@science.ru.nl)

\end{document}